\newtheorem{theorem}{Theorem}
\newtheorem{corollary}{Corollary}
\newtheorem{lemma}{Lemma}
\theoremstyle{definition}
\newtheorem{remark}{Remark}
\begin{document}

\title{Some formulas for numbers of line segments and lines in
a rectangular grid}

\author{Pentti Haukkanen\footnote{E-mail:
pentti.haukkanen@uta.fi}\,\, and Jorma K. Merikoski\footnote{E-mail:
jorma.merikoski@uta.fi}\\ School of Information Sciences\\ FI-33014
University of Tampere, Finland}
\date{}

\maketitle

\begin{abstract}
We present a formula for the number of line segments connecting $q+1$
points of an $n_1\times\dots\times n_k$ rectangular grid. As
corollaries, we obtain formulas for the number of lines through at
least $q$ points and, respectively, through exactly $q$ points of the
grid. The well-known case $k=2$ is so generalized. We also present
recursive formulas for these numbers assuming $k=2$, $n_1=n_2$. The
well-known case $q=2$ is so generalized.

\medskip
\noindent \textit{Keywords:} Rectangular grid, lattice points,
recursive formulas

\medskip
\noindent \textit{AMS classification:} 05A99, 11B37, 11P21
\end{abstract}

\section{Introduction}

Let us consider a rectangular grid
\[
G(n_1,\dots,n_k)=
\{0,\dots,n_1-1\}\times\dots\times\{0,\dots,n_k-1\},
\]
where $k,n_1,\dots,n_k\ge 2$. Call its points \textit{gridpoints}.
Given $q\ge 2$, we say that a line is a \textit{$q$-gridline} if it
goes through exactly $q$ gridpoints. We write $l_q(n_1,\dots,n_k)$ for
the number of $q$-gridlines, and $l_{\ge q}(n_1,\dots,n_k)$ for the
number of gridlines through at least $q$ gridpoints. In other words,
$l_{\ge q}(n_1,\dots,n_k)$ is the sum of all $l_p(n_1,\dots,n_k)$'s
with $p\ge q$.

We also say that a line segment is a \textit{$q$-gridsegment} if its
endpoints and exactly $q-2$ interior points are gridpoints. Let
$c_q(n_1,\dots,n_k)$ denote the number of all $q$-gridsegments. (If
$q>2$, some of them may partially overlap.) In other words,
$c_q(n_1,\dots,n_k)$ is the number of line segments between such
gridpoints that are visible to each other through $q-2$ gridpoints.

Our first problem, discussed in Section~\ref{explicit}, is to find
formulas for these numbers. They are well-known~\cite{Mu1,Mu2,Sl} in
case of~$k=2$. We will see that Mustonen's~\cite{Mu1, Mu2} ideas work
also in the general case.

These ``explicit'' formulas may have theoretical value but their
practical value is small. They are computationally tedious and do not
tell much about the behaviour of the functions. This motivates to look
for recursive or asymptotic formulas. We do not discuss asymptotic
formulas here but refer to~\cite{EMHM} and note that we are pursuing
this matter further~\cite{HM}.

So our second problem is to find recursive formulas. We consider only
the case $k=2$, $n_1=n_2$ and denote $l_q(n)=l_q(n,n)$, $l_{\ge
q}(n)=l_{\ge q}(n,n)$, $c_q(n)=c_q(n,n)$. Mustonen~\cite{Mu1}
conjectured and Ernvall-Hyt\"onen et al.~\cite{EMHM} proved recursive
formulas for~$l_{\ge 2}(n)$. We will in Section~\ref{recursive} settle
this problem generally. In fact, Mustonen~\cite{Mu2} has already done
so, but, in our opinion, his results deserve publication also in a
journal.

We will complete our paper with remarks in Section~\ref{remarks}.

\section{Explicit formulas}\label{explicit}

\begin{theorem}\label{explthm}
For all $k,n_1,\dots,n_k\ge 2$, $q\ge 1$,
\[
c_{q+1}(n_1,\dots,n_k)=\frac{1}{2}f_q(n_1,\dots,n_k),
\]
where
\[
f_q(n_1,\dots,n_k)=
\sum_{\substack{-n_1<i_1<n_1\\\dots\\[0.5ex]-n_k<i_k<n_k\\
\gcd(i_1,\dots,i_k)=q}}(n_1-|i_1|)\cdots(n_k-|i_k|).
\]
\end{theorem}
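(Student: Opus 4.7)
The plan is to enumerate $(q+1)$-gridsegments by their pair of endpoints. A line segment whose endpoints are the gridpoints $P$ and $Q$ is determined by the unordered pair $\{P,Q\}$, so I would first count the \emph{ordered} pairs and divide by $2$. The factor $\tfrac{1}{2}$ in the statement suggests exactly this.

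The key observation is a standard lemma on lattice points: if $P,Q\in\mathbb{Z}^k$ and $Q-P=(i_1,\dots,i_k)\neq 0$, then the gridpoints lying on the segment $PQ$ are precisely
\[
P+\frac{j}{g}(Q-P),\qquad j=0,1,\dots,g,
\]
where $g=\gcd(i_1,\dots,i_k)$. In particular, the segment $PQ$ contains exactly $g+1$ gridpoints, so it is a $(q+1)$-gridsegment (with $q-1$ interior gridpoints and $2$ endpoints) if and only if $\gcd(i_1,\dots,i_k)=q$. I would include a brief proof of this lemma by writing $Q-P=g\,v$ with $v$ primitive and noting that $P+tv\in\mathbb{Z}^k$ forces $t\in\mathbb{Z}$.

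With this in hand, for a fixed displacement vector $(i_1,\dots,i_k)$ with $\gcd(i_1,\dots,i_k)=q$, I would count the ordered pairs $(P,Q)$ with $Q-P=(i_1,\dots,i_k)$ and both $P,Q\in G(n_1,\dots,n_k)$. The constraint on the $j$-th coordinate is $0\le P_j\le n_j-1$ and $0\le P_j+i_j\le n_j-1$, which has $n_j-|i_j|$ solutions when $|i_j|<n_j$ and none otherwise. Thus the number of ordered pairs is $\prod_{j=1}^{k}(n_j-|i_j|)$, provided $-n_j<i_j<n_j$ for every $j$.

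Summing over all admissible displacements with $\gcd$ equal to $q$ and dividing by $2$ yields exactly $\tfrac{1}{2}f_q(n_1,\dots,n_k)$. No displacement needs to be excluded as ``zero'' since $q\ge 1$ already forces $(i_1,\dots,i_k)\neq 0$. The only potential obstacle is the lattice-point lemma, but it is elementary; the rest is a bookkeeping computation. I would close by remarking that segments with partial overlap (possible when $q>2$) cause no double counting here, because the correspondence between $(q+1)$-gridsegments and unordered endpoint pairs $\{P,Q\}$ with $\gcd(Q-P)=q$ is a genuine bijection.
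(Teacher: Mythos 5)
Your proposal is correct and follows essentially the same route as the paper: fix the displacement vector $(i_1,\dots,i_k)$, observe that the segment is a $(q+1)$-gridsegment precisely when $\gcd(i_1,\dots,i_k)=q$, count the $\prod_j(n_j-|i_j|)$ placements, sum over admissible displacements, and halve to pass from ordered to unordered endpoint pairs. Your explicit statement and justification of the lattice-point lemma is a small bonus in rigor over the paper's one-line assertion, but the argument is the same.
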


\begin{proof}
Consider a line segment $c$ whose endpoints $A=(x_1,\dots,x_k)$ and
$B=(x_1+i_1,\dots,x_k+i_k)$ are gridpoints. Then $c$ is a
$(q+1)$-gridsegment if and only if also the points
\[
\Bigl(x_1+\frac{t}{q}i_1,\dots,x_k+\frac{t}{q}i_k\Bigr),\quad
t=1,\dots,q-1,
\]
are gridpoints and there are no other gridpoints in~$c$. This happens
if and only if $\gcd{(i_1,\dots,i_k)}=q$. Fix $i_1,\dots,i_k$ with the
properties
\begin{equation}\label{ij}
\gcd{(i_1,\dots,i_k)}=q \quad\text{and}\quad
-n_j<i_j<n_j,\quad
j=1,\dots,k.
\end{equation}
We can choose $x_j$ in $n_j-|i_j|$ ways, and so the number of all
possible choices of~$A$ is $ (n_1-|i_1|)\cdots(n_k-|i_k|). $ Summing
over all $i_1,\dots,i_k$ satisfying~(\ref{ij}), we
obtain~$f_q(n_1,\dots,n_k)$. We halve it, because $-i_1,\dots,-i_k$
lead to same gridsegments as $i_1,\dots,i_k$. So the theorem follows.
\end{proof}

\begin{corollary}\label{explcor1}
Let $k,n_1,\dots,n_k,q\ge 2$. Then
\[
l_{\ge
q}(n_1,\dots,n_k)=\frac{1}{2}(f_{q-1}(n_1,\dots,n_k)-
f_q(n_1,\dots,n_k)).
\]
\end{corollary}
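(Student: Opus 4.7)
My plan is to establish the identity $l_{\ge q}=c_q-c_{q+1}$ by a direct line-by-line counting argument, and then apply Theorem~\ref{explthm} twice (with $q-1$ and $q$ in place of the theorem's index $q$) to rewrite both sides.

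To see $l_{\ge q}=c_q-c_{q+1}$, I will fix a line $L$ containing $p\ge 2$ gridpoints. These gridpoints are equally spaced along $L$ (since consecutive ones differ by the primitive direction vector of $L$), so I can label them $P_1,\dots,P_p$ in order. For any integer $r\ge 2$, the $r$-gridsegments contained in $L$ are precisely the subsegments $P_iP_{i+r-1}$ with $1\le i\le p-r+1$; there are $\max(0,p-r+1)$ of them. Hence $L$'s contributions to $c_q$ and $c_{q+1}$ are $\max(0,p-q+1)$ and $\max(0,p-q)$, respectively. A short case check shows that the difference is $1$ when $p\ge q$ and $0$ when $p<q$.

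Summing over all lines $L$ that contain at least two gridpoints therefore gives
\[
c_q(n_1,\dots,n_k)-c_{q+1}(n_1,\dots,n_k)=\#\{L:p(L)\ge q\}=l_{\ge q}(n_1,\dots,n_k).
\]
Then Theorem~\ref{explthm} yields $c_q=\tfrac12 f_{q-1}$ and $c_{q+1}=\tfrac12 f_q$, and subtracting delivers the claimed formula.

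I do not foresee a real obstacle: the computation is a straightforward telescoping once the observation ``number of $r$-gridsegments on $L$ equals $\max(0,p-r+1)$'' is in place. The only point requiring mild care is the boundary case $p<q$, where both $c_q$ and $c_{q+1}$ receive zero contribution from $L$, so that such lines (and indeed those with no gridpoints at all) do not disturb the telescoping.
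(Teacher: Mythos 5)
Your proposal is correct and follows essentially the same route as the paper: both rest on the observation that a line through $p$ gridpoints carries $p-r+1$ $r$-gridsegments (so the per-line difference of $q$- and $(q+1)$-gridsegment counts is $1$ exactly when $p\ge q$), giving $l_{\ge q}=c_q-c_{q+1}$, after which Theorem~\ref{explthm} is applied. The only cosmetic difference is that you sum over all lines with at least two gridpoints and note that lines with $p<q$ contribute $0$, whereas the paper restricts the sum to lines with at least $q$ gridpoints from the outset.
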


\begin{proof}
Let $L$ be the set of $p$-gridlines over all $p\ge q$. Take $l\in L$.
It contains $p-q+1$ $q$-gridsegments. On the other hand, it contains
$p-q$ $(q+1)$-gridsegments. If $N(r,l)$ stands for the number of
$r$-gridsegments on~$l$, we therefore have $N(q,l)-N(q+1,l)=1$, and so
\begin{align*}
    l_{\ge q}(n_1,\dots,n_k)=\sum_{l\in L}1&=\sum_{l\in L}(N(q,l)-N(q+1,l)) \\
    &= \sum_{l\in L}N(q,l)-\sum_{l\in L}N(q+1,l) \\
    &= c_q(n_1,\dots,n_k)-c_{q+1}(n_1,\dots,n_k) \\
    &= \frac{1}{2}f_{q-1}(n_1,\dots,n_k)-\frac{1}{2}f_q(n_1,\dots,n_k).
\end{align*}
\end{proof}

\begin{corollary}\label{explcor2}
Let $k,n_1,\dots,n_k,q\ge 2$. Then
\[
l_q(n_1,\dots,n_k)=\frac{1}{2}(f_{q+1}(n_1,\dots,n_k)
-2f_q(n_1,\dots,n_k)+f_{q-1}(n_1,\dots,n_k)).
\]
\end{corollary}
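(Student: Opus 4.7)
The plan is to derive this corollary directly from Corollary~\ref{explcor1} using the obvious identity
\[
l_q(n_1,\dots,n_k) = l_{\ge q}(n_1,\dots,n_k) - l_{\ge q+1}(n_1,\dots,n_k),
\]
which holds because a line through at least $q$ gridpoints either passes through exactly $q$ of them or through at least $q+1$ of them, and these two cases are disjoint and exhaustive.

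Given this identity, the proof is a one-line substitution. I would apply Corollary~\ref{explcor1} twice, once with parameter $q$ and once with parameter $q+1$, to obtain
\[
l_q(n_1,\dots,n_k) = \tfrac{1}{2}\bigl(f_{q-1}-f_q\bigr) - \tfrac{1}{2}\bigl(f_{q}-f_{q+1}\bigr),
\]
where each $f_r$ is shorthand for $f_r(n_1,\dots,n_k)$. Collecting terms yields the desired expression $\tfrac{1}{2}(f_{q+1}-2f_q+f_{q-1})$.

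There is no real obstacle here; the only thing worth checking is that Corollary~\ref{explcor1} is applicable for the parameter $q+1$, i.e.\ that the hypothesis $q\ge 2$ is preserved. Since we assume $q\ge 2$, we certainly have $q+1\ge 2$ as well, so the application is legitimate. Thus the corollary reduces to a telescoping-style combination of two applications of the preceding result, with no additional combinatorial argument needed.
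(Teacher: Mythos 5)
Your proposal is correct and follows exactly the paper's route: the authors likewise just note $l_q = l_{\ge q} - l_{\ge q+1}$ and apply Corollary~\ref{explcor1} twice. Nothing to add.
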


\begin{proof}
Simply note that
\[
    l_q(n_1,\dots,n_k)=l_{\ge q}(n_1,\dots,n_k)-l_{\ge q+1}(n_1,\dots,n_k).
\]
\end{proof}

\section{Recursive formulas}\label{recursive}

Let us consider the function
\[
e_q(n)=\begin{cases}
    \phi(\frac{n-1}{q}) & \text{if }q\mid n-1,\\
    0 & \text{if }q\nmid n-1,
\end{cases}
\]
where $\phi$ is the Euler totient function. We present first the
following elementary lemma. If~$q=1$, then (\ref{sum1}) is trivial and
(\ref{sum2}) is well-known (e.g.,~\cite[Exercise~2.16]{Ap},
\cite[Lemma~1]{EMHM}).

\begin{lemma}
\label{sums} Let $n\ge 2$, $q\ge 1$. Then
\begin{equation}
\label{sum1}
\sum_{\substack{i=1\\(i,n)=q}}^n1=e_q(n+1).
\end{equation}
Furthermore,
\begin{equation}
\label{sum2}
\sum_{\substack{i=1\\(i,n)=q}}^ni=
\sum_{\substack{i=1\\(i,n)=q}}^n(n-i)=\frac{1}{2}ne_q(n+1)
\end{equation}
if $q\ne n$, and
\begin{equation}
\label{sum3}
\sum_{\substack{i=1\\(i,n)=n}}^ni=n,\quad
\sum_{\substack{i=1\\(i,n)=n}}^n(n-i)=0.
\end{equation}
\end{lemma}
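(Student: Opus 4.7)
The plan is to treat the three identities in sequence, using a common setup. Let $S = S_{n,q} = \{i : 1 \le i \le n,\ (i,n) = q\}$. The key observations are a bijective rewriting for the counting identity and an involution $i \mapsto n-i$ on $S$ for the summation identity; the only subtlety is that the involution fails when $q = n$, which is exactly why (\ref{sum3}) is separated out.

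For (\ref{sum1}), I would note that if $q \nmid n$ then $S$ is empty and both sides vanish. Otherwise, every $i \in S$ can be written uniquely as $i = qj$ with $1 \le j \le n/q$ and $(j, n/q) = 1$, because $(qj, n) = q(j, n/q)$. This sets up a bijection between $S$ and $\{j : 1 \le j \le n/q,\ (j, n/q) = 1\}$, whose cardinality is $\phi(n/q) = e_q(n+1)$.

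For (\ref{sum3}), I would simply observe that $(i,n) = n$ for $1 \le i \le n$ forces $n \mid i$, hence $i = n$, giving $\sum i = n$ and $\sum(n-i) = 0$ directly.

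For (\ref{sum2}), under the assumption $q \ne n$, I would use that $i \mapsto n-i$ is an involution on $S$. Two checks are needed: first, the image is in $\{1, \ldots, n\}$, which holds because $i \in S$ and $q \ne n$ together force $1 \le i \le n-1$, so $1 \le n-i \le n-1$; second, $(n-i, n) = (i, n) = q$, so the image lies in $S$. Hence this is a bijection $S \to S$, so the two sums $\sum_{i \in S} i$ and $\sum_{i \in S}(n-i)$ are equal by reindexing. Their sum is $\sum_{i \in S} n = n |S| = n e_q(n+1)$ by (\ref{sum1}), so each equals $\tfrac12 n e_q(n+1)$. This avoids having to split further on the parity of $\phi(n/q)$ or on the special fixed-point case $i = n/2$ (which would occur when $q = n/2$): the sums-are-equal argument works uniformly without inspecting individual orbits of the involution.

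No serious obstacle is expected; the only thing to be careful about is exactly why $q \ne n$ is needed, namely to guarantee that $i \mapsto n-i$ stays inside $\{1, \ldots, n\}$ (i.e.\ to exclude $i = n$ from $S$).
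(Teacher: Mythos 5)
Your proof is correct and takes essentially the same route as the paper's: the same reduction of the counting identity to $\phi(n/q)$ via $i=qj$, and the same symmetry argument for the weighted sums, which the paper phrases as the reindexing $i\mapsto n+1-i$ followed by an index shift rather than as an involution $i\mapsto n-i$ on the set $S$. Your remark that $q\ne n$ is needed precisely to keep $n-i$ in range matches the paper's separation of the $q=n$ case.
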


\begin{proof}
If $q\nmid n$, then both sides of~(\ref{sum1}) and all sides
of~(\ref{sum2}) are zero. The case~$q\mid n$ remains; let $n=kq$. Then
\[
\sum_{\substack{i=1\\(i,n)=q}}^n1=
\sum_{\substack{i=1\\(i,k)=1}}^k1=\phi(k)=
\phi(\frac{n}{q})=e_q(n+1),
\]
and (\ref{sum1}) follows. To show~(\ref{sum2}), assume $k>1$. Because
\[
\sum_{\substack{i=1\\(i,n)=q}}^ni=
\sum_{\substack{i=1\\(n+1-i,n)=q}}^n(n+1-i)=
\sum_{\substack{i=0\\(n-i,n)=q}}^{n-1}(n-i)=
\sum_{\substack{i=1\\(i,n)=q}}^n(n-i),
\]
the first equality holds. Hence, by~(\ref{sum1}),
\[
2\sum_{\substack{i=1\\(i,n)=q}}^ni=
\sum_{\substack{i=1\\(i,n)=q}}^ni+
\sum_{\substack{i=1\\(i,n)=q}}^n(n-i)=
\sum_{\substack{i=1\\(i,n)=q}}^nn=
n\sum_{\substack{i=1\\(i,n)=q}}^n1=ne_q(n+1),
\]
and so also the second holds. The claim~(\ref{sum3}) is trivial.
\end{proof}

\begin{theorem}\label{recthm1}
Let $n\ge 2$, $q\ge 1$. Then
\begin{align}
    f_q(n) &= 2f_q(n-1,n)-f_q(n-1)+r_q(n), \label{fq1} \\
    f_q(n-1,n) &= 2f_q(n-1)-f_q(n-2,n-1)+s_q(n). \label{fq2}
\end{align}
Here $f_q(n)=f_q(n,n)$,
\begin{equation*}
r_q(n)=8(e_q(2)+e_q(3)+\dots+e_q(n))
\end{equation*}
and so
\begin{equation*}
r_q(n)-r_q(n-1)=8e_q(n),
\end{equation*}
and
\begin{equation*}
s_q(n)=2(n-1)e_q(n).
\end{equation*}
The initial values are $f_q(n)=f_q(n-1,n)=0$ for $n\le q$.
\end{theorem}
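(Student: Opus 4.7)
The plan is to view $f_q(n_1, n_2)$ as a symmetric function of two positive-integer arguments and to express both recurrences as finite-difference identities, reducing everything to the single first-order telescope
\[
f_q(n_1, n_2) - f_q(n_1-1, n_2) = \sum_{\substack{-n_1<i_1<n_1,\ -n_2<i_2<n_2 \\ \gcd(i_1,i_2)=q}} (n_2 - |i_2|).
\]
This identity follows by splitting the defining sum of $f_q(n_1,n_2)$ according to whether $|i_1|<n_1-1$ or $|i_1|=n_1-1$: the former contributes $(n_2-|i_2|)$ per term because $(n_1-|i_1|)-(n_1-1-|i_1|)=1$, while the latter contributes $1\cdot(n_2-|i_2|)$ since $n_1-|i_1|=1$ and these indices lie outside the range of $f_q(n_1-1,n_2)$. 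An analogous identity holds in the second coordinate, and the symmetry $f_q(a,b)=f_q(b,a)$ is immediate from the definition.

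For (\ref{fq1}), I would rewrite the claim as $f_q(n)-2f_q(n-1,n)+f_q(n-1)=r_q(n)$ and use symmetry to recognize the left side as the mixed second difference
\[
f_q(n,n)-f_q(n-1,n)-f_q(n,n-1)+f_q(n-1,n-1).
\]
Applying the first-order telescope once in each coordinate collapses this to the plain count
\[
N_q(n)=\#\bigl\{(i,j) : -n<i<n,\ -n<j<n,\ \gcd(i,j)=q\bigr\}.
\]
It then suffices to verify $N_q(n)-N_q(n-1)=8\,e_q(n)$ together with $N_q(1)=0$, which yields $N_q(n)=r_q(n)$ by summation. The increment counts pairs with $\max(|i|,|j|)=n-1$; when $q\nmid n-1$ no such pair has $\gcd=q$, and when $n-1=qk$ one parameterizes the edge pairs as $i=\pm qk$, $j=ql$ (and symmetrically) with $\gcd(k,l)=1$, $|l|<k$, obtaining exactly $8\phi(k)=8\,e_q(n)$, subject to a separate check of the case $k=1$ in which the four corners $(\pm q,\pm q)$ must be counted in.

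For (\ref{fq2}), symmetry turns the left side into the pure first-coordinate second difference
\[
\Delta_1^2 f_q(n-2,n-1)=f_q(n,n-1)-2f_q(n-1,n-1)+f_q(n-2,n-1),
\]
and two applications of the first-order telescope collapse this to the single-edge sum
\[
\sum_{\substack{|i|=n-1,\ |j|<n-1 \\ \gcd(i,j)=q}}(n-1-|j|),
\]
which is empty unless $q\mid n-1$. Writing $n-1=qk$ and passing to the variable $l$ with $j=ql$, the involution $l\mapsto k-l$ symmetrizes the sum, and Lemma~\ref{sums} (applied with $n=k$, inner $q=1$ in the case $k>1$) evaluates it as $2(n-1)\phi(k)=s_q(n)$; the case $k=1$ is checked by direct inspection. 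The initial conditions $f_q=0$ for $n\le q$ hold because then $|i_j|<q$ forces $i_1=i_2=0$, and the surviving pair $(0,0)$ is excluded by $\gcd(0,0)=0\ne q$.

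The main obstacle I expect is the careful accounting of boundary pairs. In both recurrences the case $k=1$ (equivalently $n-1=q$) behaves differently from the generic $k>1$ case, because the standard $2\phi(k)$ count for coprime residues degenerates at $k=1$; in (\ref{fq1}) the four corner pairs $(\pm q,\pm q)$ of $\gcd=q$ must be added by hand to recover the total of~$8$, and in (\ref{fq2}) the relevant clause of Lemma~\ref{sums} is only stated under the hypothesis $q\ne n$, so $k=1$ must again be isolated.
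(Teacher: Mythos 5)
Your argument is correct, but it is organized quite differently from the paper's proof, and the comparison is worth making explicit. The paper proves (\ref{fq1}) and (\ref{fq2}) by brute force: it expands each of $f_q(n)$, $f_q(n-1,n)$, $f_q(n-1)$, $f_q(n-2,n-1)$ as an explicit sum over the positive quadrant plus axis terms, applies pointwise polynomial identities such as $(n-i)(n-j)-2(n-1-i)(n-j)+(n-1-i)(n-1-j)=1+i-j$, and then invokes Lemma~\ref{sums}; it also needs a separate Case~2 for $q=n-1$. You instead set up the clean first-order telescope $f_q(n_1,n_2)-f_q(n_1-1,n_2)=\sum(n_2-|i_2|)$ and use the symmetry $f_q(a,b)=f_q(b,a)$ to recognize the left sides of (\ref{fq1}) and (\ref{fq2}) as a mixed and a pure second difference, respectively; two telescopes then collapse these to the plain lattice count $N_q(n)$ and to a single-edge weighted sum. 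This buys a transparent combinatorial meaning for $r_q(n)$ (it is exactly the number of direction vectors $(i,j)$ with $|i|,|j|<n$ and $\gcd(i,j)=q$, so $r_q(n)-r_q(n-1)=8e_q(n)$ is just a boundary count) and replaces most of the paper's algebra with bookkeeping. The cost is that the degenerate case reappears as your $k=1$ corner analysis, which is precisely the paper's Case~2 in disguise, and you still need Lemma~\ref{sums} (or the equivalent totient counts) to evaluate the edge sum in (\ref{fq2}). I checked the delicate points --- the $8\phi(k)$ boundary count including the four corners when $k=1$, the evaluation $2qk\phi(k)=2(n-1)e_q(n)$ of the edge sum, and the vanishing initial values via $\gcd(0,0)\ne q$ --- and they all hold, so the proof is complete as outlined.
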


\begin{proof}
If $q>n-1$, then everything is zero. So we assume $q\le n-1$.

\medskip
\noindent\textit{Case 1.} $q<n-1$. Since
\begin{align}
    f_q(n)
    &= 4\Bigl[n(n-q)+\sum_{\substack{0<i,j<n\\(i,j)=q}}(n-i)(n-j)\Bigr] \notag \\
    &= 4\Bigl[n^2-qn+\sum_{\substack{0<i,j<n-1\\(i,j)=q}}(n-i)(n-j)
        +2\sum_{\substack{i=1\\(i,n-1)=q}}^{n-2}(n-i)\Bigr], \label{fqproof1}
\end{align}
\begin{multline}\label{fqproof2}
    f_q(n-1,n) \\
    \begin{aligned}
    &= 2\Bigl[(n-1)(n-q)+(n-1-q)n+2\sum_{\substack{0<i<n-1\\0<j<n\\(i,j)=q}}(n-1-i)(n-j)\Bigr] \\
    &= 2\Bigl[2n^2-2(q+1)n+q+2\sum_{\substack{0<i,j<n-1\\(i,j)=q}}(n-1-i)(n-j)
    \end{aligned}\\
    +2\sum_{\substack{i=1\\(i,n-1)=q}}^{n-2}(n-1-i)\Bigr],
\end{multline}
\begin{align}
    f_q(n&-1) \notag \\
    &= 4\Bigl[(n-1)(n-1-q)+\sum_{\substack{0<i,j<n-1\\(i,j)=q}}(n-1-i)(n-1-j)\Bigr] \notag \\
    &= 4\Bigl[n^2-(q+2)n+q+1+\sum_{\substack{0<i,j<n-1\\(i,j)=q}}(n-1-i)(n-1-j)\Bigr], \label{fqproof3}
\end{align}
and
\begin{gather*}
    n^2-qn-[2n^2-2(q+1)n+q]+n^2-(q+2)n+q+1=1, \\
    (n-i)(n-j)-2(n-1-i)(n-j)+(n-1-i)(n-1-j)=1+i-j, \\
    n-i-(n-1-i)=1, \\
    \sum_{\substack{0<i,j<n-1\\(i,j)=q}}(1+i-j)
    = \sum_{\substack{0<i,j<n-1\\(i,j)=q}}1
    = 2\sum_{i=1}^{n-2}\sum_{\substack{j=1\\(i,j)=q}}^i1-1,
\end{gather*}
we have, by (\ref{fqproof1}), (\ref{fqproof2}), (\ref{fqproof3})
and~(\ref{sum1}),
\begin{align*}
    f_q(n)-{}&2f_q(n-1,n)+f_q(n-1) \\
    &= 4+4\Bigl(2\sum_{i=1}^{n-2}\sum_{\substack{j=1\\(i,j)=q}}^i 1-1\Bigr)
        +8\sum_{\substack{i=1\\(i,n-1)=q}}^{n-2}1 \\
    &= 8\sum_{i=1}^{n-2}\sum_{\substack{j=1\\(i,j)=q}}^i 1+8\sum_{\substack{i=1\\(i,n-1)=q}}^{n-2}1
    = 8\sum_{i=1}^{n-2}\sum_{\substack{j=1\\(i,j)=q}}^i 1+8\sum_{\substack{i=1\\(i,n-1)=q}}^{n-1}1 \\
    &= 8\sum_{i=1}^{n-2}e_q(i+1)+8e_q(n)
    = 8\sum_{i=2}^ne_q(i)=r_q(n),
\end{align*}
and (\ref{fq1}) follows.

To show (\ref{fq2}), we start from (\ref{fqproof2}), (\ref{fqproof3}),
and the fact that
\begin{multline*}
    f_q(n-2,n-1)
    = 2\Bigl[(n-2)(n-1-q)+(n-2-q)(n-1) \\
    \shoveright{+2\sum_{\substack{0<i<n-2\\0<j<n-1\\(i,j)=q}} (n-2-i)(n-1-j)\Bigr]} \\
    = 2\Bigl[2n^2-2(q+3)n+3q+4+2\sum_{\substack{0<i,j<n-1\\(i,j)=q}}(n-2-i)(n-1-j)\Bigr].
\end{multline*}
Since
\begin{multline*}
    2n^2-2(q+1)n+q-4[n^2-(q+2)n+q+1] \\
    \shoveright{+2n^2-2(q+3)n+3q+4=0,\quad} \\
    (n-1-i)(n-j)-2(n-1-i)(n-1-j)+(n-2-i)(n-1-j)=j-i,
\end{multline*}
we have
\begin{align*}
    f_q(n-1,n&)-2f_q(n-1)+f_q(n-2,n-1) \\
    &= 4\sum_{\substack{0<i,j<n-1\\(i,j)=q}}(j-i)+4\sum_{\substack{i=1\\(i,n-1)=q}}^{n-1}(n-1-i) \\
    &= 0+4\sum_{\substack{i=1\\(i,n-1)=q}}^{n-1}(n-1-i)
    = 2(n-1)e_q(n)
    = s_q(n).
\end{align*}

\medskip
\noindent \textit{Case 2.} $q=n-1$. Simply note that
\begin{multline*}
    f_{n-1}(n)-2f_{n-1}(n-1,n)+f_{n-1}(n-1) \\
    \shoveright{\begin{aligned}
    &= 4(n+1)-2\cdot2(n-1)+0 \\
    &= 8
    = 8e_{n-1}(n)
    = 8\sum_{i=2}^ne_{n-1}(i)=r_{n-1}(n),
    \end{aligned}\quad}\\
    \shoveleft{f_{n-1}(n-1,n)-2f_{n-1}(n-1)+f_{n-1}(n-2,n-1)} \\
    = 2(n-1)-0+0
    = 2(n-1)e_{n-1}(n)
    = s_{n-1}(n).
\end{multline*}
\end{proof}

\begin{corollary}
Let $n,q\ge 2$. Then
\begin{align}
    l_{\ge q}(n) &= 2l_{\ge q}(n-1,n)-l_{\ge q}(n-1)+\rho_{\ge q}(n), \label{lgeq1} \\
    l_{\ge q}(n-1,n) &= 2l_{\ge q}(n-1)-l_{\ge q}(n-2,n-1)+\sigma_{\ge q}(n). \label{lgeq2}
\end{align}
Here
\begin{equation*}
    \rho_{\ge q}(n)=4\sum_{i=2}^n(e_{q-1}(i)-e_q(i))
\end{equation*}
and so
\begin{equation*}
    \rho_{\ge q}(n)-\rho_{\ge q}(n-1)=4(e_{q-1}(n)-e_q(n)),
\end{equation*}
and
\begin{equation*}
    \sigma_{\ge q}(n)=(n-1)(e_{q-1}(n)-e_q(n)).
\end{equation*}
The initial values are $l_{\ge q}(n)=l_{\ge q}(n-1,n)=0$ for $n<q$, and
$l_{\ge q}(q-1,q)=q-1$.
\end{corollary}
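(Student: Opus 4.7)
The plan is to reduce everything to Corollary~\ref{explcor1} and Theorem~\ref{recthm1}. By Corollary~\ref{explcor1}, $l_{\ge q}(\cdot) = \frac{1}{2}(f_{q-1}(\cdot)-f_q(\cdot))$, so any identity linear in the values of $f_q$ (at varying grid sizes but with a fixed second index) transfers to $l_{\ge q}$ by subtracting the $q$-version from the $(q-1)$-version and halving. Since the two recursions (\ref{lgeq1}) and (\ref{lgeq2}) have exactly the same shape as (\ref{fq1}) and (\ref{fq2}), this subtract-and-halve manoeuvre is what I would perform.

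In detail, I would write (\ref{fq1}) once with index $q-1$ and once with index $q$, subtract the second identity from the first, and divide by two. The left-hand side becomes $l_{\ge q}(n)$; the first two terms on the right become $2l_{\ge q}(n-1,n) - l_{\ge q}(n-1)$; and the inhomogeneous term is $\frac{1}{2}(r_{q-1}(n)-r_q(n))$. Substituting $r_j(n) = 8\sum_{i=2}^n e_j(i)$ from Theorem~\ref{recthm1} identifies this with $\rho_{\ge q}(n)$, and the telescoped form $\rho_{\ge q}(n)-\rho_{\ge q}(n-1)=4(e_{q-1}(n)-e_q(n))$ follows immediately. Applying the same reduction to (\ref{fq2}) gives (\ref{lgeq2}), with $\frac{1}{2}(s_{q-1}(n)-s_q(n)) = (n-1)(e_{q-1}(n)-e_q(n)) = \sigma_{\ge q}(n)$.

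The only part that does not come directly from Theorem~\ref{recthm1} is the list of initial values, which must be read off from the combinatorial meaning of $l_{\ge q}$. For $n<q$, every line in $G(n,n)$ (respectively in $G(n-1,n)$) contains at most $n<q$ gridpoints, so $l_{\ge q}(n) = l_{\ge q}(n-1,n) = 0$. For $l_{\ge q}(q-1,q)$, a line can carry at most $q$ gridpoints in the $(q-1)\times q$ grid (the shorter side has only $q-1$ points), and exactly the $q-1$ axis-parallel lines whose longer coordinate is free attain this maximum; hence $l_{\ge q}(q-1,q) = q-1$.

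The main obstacle is therefore only arithmetic bookkeeping: checking that the factor $\frac{1}{2}$ from Corollary~\ref{explcor1} turns the coefficients $8$ and $2(n-1)$ appearing in Theorem~\ref{recthm1} into the stated $4$ and $(n-1)$, and confirming the three small base cases. Nothing deeper than a linear combination of previously established identities is required.
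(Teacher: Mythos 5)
Your proposal is correct and is exactly the paper's (one-line) argument spelled out: the authors also derive the corollary by applying Corollary~\ref{explcor1} to the two recursions of Theorem~\ref{recthm1}, with the subtract-and-halve step turning $r$ and $s$ into $\rho_{\ge q}$ and $\sigma_{\ge q}$. Your verification of the initial values is a sound supplement that the paper leaves implicit.
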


\begin{proof}
Recall Corollary~\ref{explcor1}.
\end{proof}

\begin{corollary}
Let $n,q\ge 2$. Then
\begin{align}
    l_q(n) &= 2l_q(n-1,n)-l_q(n-1)+\rho_q(n), \label{lq1} \\
    l_q(n-1,n) &= 2l_q(n-1)-l_q(n-2,n-1)+\sigma_q(n). \label{lq2}
\end{align}
Here
\begin{equation*}
    \rho_q(n)=4\sum_{i=2}^n(e_{q-1}(i)-2e_q(i)+e_{q+1}(i))
\end{equation*}
and so
\begin{equation*}
    \rho_q(n)-\rho_q(n-1)=4(e_{q-1}(n)-2e_q(n)+e_{q+1}(n)),
\end{equation*}
and
\begin{equation*}
    \sigma_q­(n)=(n-1)(e_{q-1}(n)-2e_q(n)+e_{q+1}(n)).
\end{equation*}
The initial values are $l_q(n)=l_q(n-1,n)=0$ for $n<q$, and
$l_q(q-1,q)=q-1$.
\end{corollary}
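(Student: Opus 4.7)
The plan is to derive both (\ref{lq1}) and (\ref{lq2}) as immediate consequences of Corollary~\ref{explcor2} and Theorem~\ref{recthm1}, in the same spirit as the previous corollary's one-line proof (which merely invokes Corollary~\ref{explcor1}). By Corollary~\ref{explcor2},
\[
    l_q = \tfrac{1}{2}(f_{q+1} - 2f_q + f_{q-1})
\]
when evaluated on $(n,n)$, $(n-1,n)$, $(n-1,n-1)$, or $(n-2,n-1)$, so it suffices to convert recurrences for the $f_p$'s into a recurrence for $l_q$ by a fixed linear combination.

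Concretely, I would apply the recursion (\ref{fq1}) of Theorem~\ref{recthm1} with its parameter replaced by each of $q-1$, $q$, $q+1$ in turn, and form the combination $\tfrac{1}{2}(\text{eq.}_{q+1} - 2\,\text{eq.}_q + \text{eq.}_{q-1})$. The left-hand sides collapse to $l_q(n) - 2l_q(n-1,n) + l_q(n-1)$, and the right-hand side produces the remainder
\[
    \rho_q(n) = \tfrac{1}{2}\bigl(r_{q+1}(n) - 2r_q(n) + r_{q-1}(n)\bigr).
\]
Substituting $r_p(n) = 8\sum_{i=2}^n e_p(i)$ yields the claimed closed form for $\rho_q(n)$, and the telescoping identity for $\rho_q(n) - \rho_q(n-1)$ is then automatic. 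Applying the same manipulation to (\ref{fq2}) together with $s_p(n) = 2(n-1)e_p(n)$ produces (\ref{lq2}) with $\sigma_q(n) = (n-1)(e_{q-1}(n) - 2e_q(n) + e_{q+1}(n))$.

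It remains only to verify the initial values. For $n < q$, any line in $G(n,n)$ or $G(n-1,n)$ contains at most $n < q$ gridpoints, so $l_q(n) = l_q(n-1,n) = 0$. For the boundary case, the only lines in $G(q-1,q)$ passing through exactly $q$ gridpoints are the $q-1$ vertical lines, giving $l_q(q-1,q) = q-1$. There is no real obstacle here; the proof is pure bookkeeping, and the only point to watch is that Theorem~\ref{recthm1} is stated for $q \ge 1$, which comfortably accommodates the shift to $q-1$ under the current hypothesis $q \ge 2$.
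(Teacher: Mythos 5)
Your proposal is correct and matches the paper's intent exactly: the paper's entire proof is ``Recall Corollary~\ref{explcor2},'' and the intended argument is precisely your linear combination $\tfrac{1}{2}(f_{q+1}-2f_q+f_{q-1})$ applied to the recurrences of Theorem~\ref{recthm1}, which yields the stated $\rho_q$ and $\sigma_q$. Your verification of the initial values and of the applicability of Theorem~\ref{recthm1} at parameter $q-1\ge 1$ is also sound.
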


\begin{proof}
Recall Corollary~\ref{explcor2}.
\end{proof}

Finally, we tie (\ref{fq1}) and (\ref{fq2}) together to obtain a single
recursive formula for~$f_q(n)$ only, and join similarly (\ref{lgeq1})
with (\ref{lgeq2}), and (\ref{lq1}) with (\ref{lq2}).

\begin{theorem}\label{recthm2}
Let $n,q\ge 2$. Then
\begin{align}
    f_q(n) &= f_q(n-1)+2\sum_{i=1}^ns_q(i)+2\sum_{i=1}^{n-1}r_q(i)+r_q(n), \label{fqthm3} \\
    l_{\ge q}(n) &= l_{\ge q}(n-1)+2\sum_{i=1}^n\sigma_{\ge q}(i)
        +2\sum_{i=1}^{n-1}\rho_{\ge q}(i)+\rho_{\ge q}(n), \notag \\
    l_q(n) &= l_q(n-1)+2\sum_{i=1}^n\sigma_q(i)+2\sum_{i=1}^{n-1}\rho_q(i)+\rho_q(n). \notag
\end{align}
\end{theorem}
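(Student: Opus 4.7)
The plan is to eliminate the mixed quantity $f_q(n-1,n)$ between (\ref{fq1}) and (\ref{fq2}), collapsing the coupled first-order system into a single first-order recursion for $f_q(n)$. I introduce the auxiliary difference $g(n) := f_q(n-1,n) - f_q(n-1)$, so that (\ref{fq1}) rewrites as
\[
f_q(n) - f_q(n-1) = 2g(n) + r_q(n),
\]
reducing the task to finding a closed form for $g(n)$.

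Next, I would derive a telescoping recursion for $g$ itself. Rewriting $f_q(n-2,n-1) = g(n-1) + f_q(n-2)$ in (\ref{fq2}), subtracting $f_q(n-1)$ from both sides, and then applying the displayed identity above at index $n-1$ to substitute $f_q(n-1) - f_q(n-2) = 2g(n-1) + r_q(n-1)$ produces the clean relation
\[
g(n) = g(n-1) + r_q(n-1) + s_q(n).
\]
Telescoping this from a small base where $g$ vanishes, and noting that $r_q(1) = s_q(1) = 0$ so that the summation ranges can be extended to start at $1$, gives $g(n) = \sum_{i=1}^{n-1} r_q(i) + \sum_{i=1}^{n} s_q(i)$, and substituting back into the first display yields (\ref{fqthm3}).

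For the remaining two identities, the argument is structurally identical: Corollaries~\ref{explcor1} and~\ref{explcor2} show that $l_{\ge q}$ and $l_q$ satisfy systems of exactly the same algebraic shape as (\ref{fq1})--(\ref{fq2}), with $(r_q,s_q)$ replaced by $(\rho_{\ge q},\sigma_{\ge q})$ and $(\rho_q,\sigma_q)$ respectively, so the same elimination and telescoping go through verbatim. The main delicate point is the base of the telescope: for $f_q$ it is immediate from the initial values $f_q(n) = f_q(n-1,n) = 0$ for $n \le q$, but for the $l$'s I would instead telescope from $n_0 = q-1$, where the corresponding $g$ still vanishes, and then verify, using $e_{q-1}(q) = \phi(1) = 1$ together with $e_q(i) = 0$ for $2 \le i \le q-1$, that the forcing term $\rho(q-1) + \sigma(q)$ supplies the correct initial jump matching $l_{\ge q}(q-1,q) = q-1 = l_q(q-1,q)$.
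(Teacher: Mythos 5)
Your argument is correct. The elimination is sound: setting $g(n)=f_q(n-1,n)-f_q(n-1)$, relation (\ref{fq1}) gives $f_q(n)-f_q(n-1)=2g(n)+r_q(n)$, and combining (\ref{fq2}) with (\ref{fq1}) at index $n-1$ does yield $g(n)=g(n-1)+r_q(n-1)+s_q(n)$; telescoping and using that $r_q(i)$ and $s_q(i)$ vanish for $i\le q$ (not only for $i=1$ --- you need $e_q(i)=0$ for $2\le i\le q$ to push both summation ranges down to $1$, but that is immediate) gives (\ref{fqthm3}). Your treatment of the base case for the $l$-recursions is also the right delicate point: the coupled system is only asserted for indices $\ge 2$, so one must check directly that $\sigma_{\ge q}(q)=(q-1)e_{q-1}(q)=q-1$ reproduces the initial value $l_{\ge q}(q-1,q)=q-1$, which you do. The paper itself gives no argument here: it simply cites the proof of the $q=2$ case in Ernvall-Hyt\"onen et al.\ and asserts that the same proof applies. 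Your write-up is therefore more self-contained than the paper's, and it makes explicit the structural point the paper leaves implicit, namely that all three identities follow from a single elimination applied to three systems of identical algebraic shape, differing only in the forcing terms and in where the telescope is anchored.
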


\begin{proof}
The formula for~$l_{\ge 2}(n)$ has already been proved
\cite[Theorem~2]{EMHM}. The same proof applies to all the above claims.
\end{proof}

Because $c_{q+1}(n)=\frac{1}{2}f_q(n)$ by Theorem~\ref{explthm}, a
trivial modification of~(\ref{fq1}), (\ref{fq2}) and (\ref{fqthm3})
gives the corresponding formulas also for~$c_q(n)$.

\section{Remarks}\label{remarks}

\begin{remark}
A function~$d$, defined on~$G(n_1,n_2)$, is a (two-dimensional)
threshold function if it takes two values 0 and~1 and if there is a
line $a_1x_1+a_2x_2+b=0$ separating $d^{-1}(\{0\})$ and $d^{-1}(\{1\})$
(i.e., $d(x_1,x_2)=0\Leftrightarrow a_1x_1+a_2x_2+b\le 0$). Let
$t(n_1,n_2)$ denote the number of such functions.
Alekseyev~\cite[Theorem~3]{Al} proved (with different notation) that
$t(n_1,n_2)=f_1(n_1,n_2)+2$. So Theorems~\ref{recthm1} and
\ref{recthm2} also imply recursive formulas for $t(n)=t(n,n)$.
\end{remark}

\begin{remark}
If $k=2$ but $n_1\ne n_2$, the question about recursive formulas is
more difficult. Mustonen~\cite{Mu1} conjectured a recursive formula for
$l_{\ge 2}(n_1,n_2)$. If $k\ge 3$, this question remains open even in
case of $n_1=\dots=n_k$.
\end{remark}

\end{document}